\def\im{\mathop{\mathrm{Im}}\nolimits}
\def\id{\mathrm{id}}
\def\T{\mathscr{T}}
\def\PT{\mathscr{PT}}
\def\I{\mathscr{I}}
\def\Sym{\mathscr{S}}
\newcommand{\End}{\mathrm{End}}
\newcommand{\wEnd}{\mathrm{wEnd}}
\newcommand{\sEnd}{\mathrm{sEnd}}
\newcommand{\swEnd}{\mathrm{swEnd}}
\newcommand{\Aut}{\mathrm{Aut}}
\newcommand{\transf}[1]{\left(\begin{smallmatrix}#1\end{smallmatrix}\right)}
\newtheorem{theorem}{Theorem}[section]
\newtheorem{proposition}[theorem]{Proposition}
\newcommand{\lastpage}{\addresss}
\newcommand{\addresss}{\small {\sf

\noindent{\sc V\'\i tor H. Fernandes},
Center for Mathematics and Applications (NOVA Math)
and Department of Mathematics,
Faculdade de Ci\^encias e Tecnologia,
Universidade Nova de Lisboa,
Monte da Caparica,
2829-516 Caparica,
Portugal;
e-mail: vhf@fct.unl.pt.

\medskip

\noindent{\sc J\"{o}rg Koppitz},
Institute of Mathematics and Informatics,
Bulgarian Academy of Sciences,
1113 Sofia,
Bulgaria;
e-mail: koppitz@math.bas.bg.

\medskip 

\noindent{\sc Tiwadee Musunthia}, 
Department of Mathematics, 
Faculty of Science, Silpakorn University, 
Nakorn Pathom, Thailand 73000; 
e-mail: tiwadee.m@gmail.com. 
}}
\title{Presentations for monoids of endomorphisms of a star graph}
\author{V\'\i tor H. Fernandes\footnote{This work is funded by national funds through the FCT - Funda\c c\~ao para a Ci\^encia e a Tecnologia, I.P., under the scope of the (Center for Mathematics and Applications) projects UIDB/00297/2020 (https://doi.org/10.54499/UIDB/00297/2020) and UIDP/00297/2020 (https://doi.org/10.54499/UIDP/00297/2020).}, J\"org Koppitz~and T. Musunthia
}
\begin{document}

\maketitle

\begin{abstract}
In this paper, we consider the monoids of all endomorphisms, of all weak endomorphisms, 
of all strong endomorphisms and of all strong weak endomorphisms
of a star graph with a finite number of vertices.
Our main objective is to exhibit a presentation for each of them.
\end{abstract}

\medskip

\noindent{\small 2020 \em Mathematics subject classification: \em 20M20, 20M05, 05C12, 05C25.}

\noindent{\small\em Keywords: \em transformations, partial endomorphisms, star graphs, presentations.}

\section*{Introduction}\label{presection}

For a set $\Omega$, denote by $\PT(\Omega)$ the monoid (under composition) of all
partial transformations on $\Omega$, by
$\T(\Omega)$ the submonoid of $\PT(\Omega)$ of all full transformations on $\Omega$ 
and by $\Sym(\Omega)$ the \textit{symmetric group} on $\Omega$,
i.e. the subgroup of $\PT(\Omega)$ of all permutations on $\Omega$.

\smallskip

Let $G=(V,E)$ be a simple graph (i.e. undirected, without loops and without multiple edges). 
Let $\alpha$ be a full transformation of $V$. We say that $\alpha$ is: 
\begin{itemize}
\item an \textit{endomorphism} of $G$ if $\{u,v\}\in E$ implies  $\{u\alpha,v\alpha\}\in E$, for all $u,v\in V$;
\item a \textit{weak endomorphism} of $G$ if $\{u,v\}\in E$ and $u\alpha\ne v\alpha$ imply  $\{u\alpha,v\alpha\}\in E$, for all $u,v\in V$;
\item a \textit{strong endomorphism} of $G$ if $\{u,v\}\in E$ if and only if  $\{u\alpha,v\alpha\}\in E$, for all $u,v\in V$;
\item a \textit{strong weak endomorphism} of $G$ if $\{u,v\}\in E$ and $u\alpha\ne v\alpha$ if and only if $\{u\alpha,v\alpha\}\in E$, for all $u,v\in V$;
\item an \textit{automorphism} of $G$ if $\alpha$ is a bijective strong endomorphism (i.e. $\alpha$ is bijective and $\alpha$ and $\alpha^{-1}$ are both endomorphisms). 
For finite graphs, any bijective week endomorphism is an automorphism. 
\end{itemize}

Denote by:
\begin{itemize}
\item $\End G$ the set of all endomorphisms of $G$;
\item $\wEnd G$ the set of all weak endomorphisms of $G$;
\item $\sEnd G$ the set of all strong endomorphisms of $G$;
\item $\swEnd G$ the set of all strong weak endomorphisms of $G$;
\item $\Aut G$ the set of all automorphisms of $G$. 
\end{itemize}

Clearly, $\End G$, $\wEnd G$, $\sEnd G$, $\swEnd G$ and $\Aut G$ are monoids under composition of mappings with the identity mapping $\id$ as the identity element. Moreover, $\Aut G$ is also a group.  
It is also clear that $\Aut G\subseteq\sEnd G\subseteq\End G,\swEnd G\subseteq\wEnd G$ 
\begin{center}
\begin{tikzpicture}[scale=0.5]
\draw (0,0) node{$\bullet$} (0,1) node{$\bullet$} (-1,2) node{$\bullet$} (1,2) node{$\bullet$} (0,3) node{$\bullet$}; 
\draw (0.9,0.1) node{\scriptsize$\Aut G$} (1.1,1.1) node{\scriptsize$\sEnd G$} (-2.3,2.1) node{\scriptsize$\swEnd G$} (1.95,2.1) node{\scriptsize$\End G$} (1.2,3.1) node{\scriptsize$\wEnd G$};
\draw[thick] (0,0) -- (0,1); 
\draw[thick] (0,1) -- (-1,2); 
\draw[thick] (0,1) -- (1,2); 
\draw[thick] (-1,2) -- (0,3);
\draw[thick] (1,2) -- (0,3);
\end{tikzpicture}
\end{center}
(these inclusions may not be strict).

\smallskip

Monoids of endomorphisms of graphs have many important applications, particularly related to automata theory; see \cite{Kelarev:2003}.
A large number of interesting results concerning graphs and algebraic properties of their endomorphism monoids have been obtained by several authors (see, for example, \cite{Bottcher&Knauer:1992,Fan:1996,Gu&Hou:2016,Hou&Luo&Fan:2012,
Knauer:2011,Knauer&Wanichsombat:2014, Li:2003,Wilkeit:1996}).
In recent years, the first two authors together with Dimitrova and Quinteiro, 
also have studied such types of monoids, in particular by considering finite undirected paths and cycles;
see \cite{Dimitrova&Fernandes&Koppitz&Quinteiro:2020,Dimitrova&Fernandes&Koppitz&Quinteiro:2021,Dimitrova&Fernandes&Koppitz&Quinteiro:2023arxiv}.

\smallskip

For any non negative integer $n$,  let $\Omega_n=\{1,2,\ldots,n\}$ and $\Omega_n^0=\{0, 1,\ldots,n\}=\{0\}\cup\Omega_n$.
Notice that, $\Omega_0=\emptyset$ and $\Omega_0^0=\{0\}$.
For an integer $n\geqslant1$, consider the \textit{star graph}
$$
S_n=(\Omega_{n-1}^0, \{\{0,i\}\mid 1\leqslant i\leqslant n-1\})
$$
with $n$ vertices.
\begin{center}
\begin{tikzpicture}
\draw (0,0) node{$\bullet$} (0,2) node{$\bullet$} (-1,1) node{$\bullet$} (1,1) node{$\bullet$};
\draw (0.7,0.3) node{$\bullet$} (0.7,1.7) node{$\bullet$} (-0.7,1.7) node{$\bullet$};
\draw (0,-0.2) node{$\scriptstyle5$} (0,2.25) node{$\scriptstyle1$} (-1.43,1) node{$\scriptstyle n-2$} (1.18,1) node{$\scriptstyle3$};
\draw (0.9,0.3) node{$\scriptstyle4$} (0.95,1.7) node{$\scriptstyle2$} (-1.1,1.7) node{$\scriptstyle n-1$};
\draw (0,1) node{$\bullet$}; \draw (-.15,.85) node{$\scriptstyle0$};
\draw[thick] (0,1) -- (0,0); \draw[thick] (0,1) -- (0,2); \draw[thick] (0,1) -- (-1,1); \draw[thick] (0,1) -- (1,1);
\draw[thick] (0,1) -- (0.7,0.3); \draw[thick] (0,1) -- (0.7,1.7) ; \draw[thick] (0,1) -- (-0.7,1.7);
\draw[thick,dotted] (0,0) arc (-90:-180:1);
\end{tikzpicture}
\end{center}

These very basic graphs, which are special examples of \textit{trees} and also of \textit{complete bipartite graphs}, play a significant role in Graph Theory,
for instance, through the notions of \textit{star chromatic number} and \textit{star arboricity}.
We may also find important applications of star graphs in Computer Science.
For example, in Distributed Computing the \textit{star network} is one of the most common computer network topologies.

In 2023, Fernandes and Paulista \cite{Fernandes&Paulista:2023} considered the monoid of all partial isometries of the star graph $S_n$. 
They determined the rank (i.e. the minimum size of a generating set) and the cardinality of this monoid as well as described its Green's relations and exhibited a presentation.
On the other hand, recently, the first two authors together with Dimitrova considered in \cite{Dimitrova&Fernandes&Koppitz:2024,Dimitrova&Fernandes&Koppitz:2024sub}
the \textit{partial counterparts} of the monoids $\sEnd(S_n)$, $\swEnd(S_n)$, $\End(S_n)$ and 
$\wEnd(S_n)$, studying their regularity, describing their Green's relations and computing their cardinalities and ranks. 

\medskip 

In this paper, our main aim is to determine presentations for the monoids $\End S_n$, $\swEnd S_n$ and $\wEnd S_n$, which we will carry out in Section \ref{presentations}. 
In the first section, we will look at some basic concepts and results about presentations. 
In Section \ref{basics}, we describe the monoids $\Aut S_n$, $\sEnd S_n$, $\End S_n$, $\swEnd S_n$ and $\wEnd S_n$, and recall some of their unpublished properties, namely cardinalities, regularity, generators and ranks, which were studied by the first author together with his undergraduate students Duarte Grilo and Samuel Medalha within the \textit{Undergraduate Research Opportunity Program} of NOVA School of Science and Technology. For the sake of completeness, we present here brief proofs of these results. 

\medskip 

For general background on Semigroup Theory and standard notations, we would like to refer the reader to Howie's book \cite{Howie:1995}.
Regarding Algebraic Graph Theory, we refer to Knauer's book \cite{Knauer:2011}.
We would also like to point out that we made use of computational tools, namely GAP \cite{GAP4}.

\section{On presentations} 

Let $X$ be a set and denote by $X^*$ the free monoid generated by
$X$. Often, in this context, the set $X$ is called an \textit{alphabet} and its elements are called \textit{words}.
A \textit{monoid presentation} is an ordered pair $\langle X\mid
R\rangle$, where $X$ is an alphabet and $R$ is a subset of
$X^*\times X^*$. An element $(u,v)$ of $X^*\times X^*$ is called a
\textit{relation} of $X^*$ and it is usually represented by $u=v$.
A monoid $M$ is said to be \textit{defined by a presentation} $\langle X\mid R\rangle$ if $M$ is
isomorphic to $X^*/{\sim_R}$, where $\sim_R$ denotes the congruence on $X^*$ generated by $R$,
i.e. $\sim_R$ is the smallest
congruence on $X^*$ containing $R$.
Suppose that $X$ is a generating set of a monoid $M$ and let $u=v$ be a relation of $X^*$.
We say that a relation $u=v$ of $X^*$ is \textit{satisfied} by $X$ if $u=v$ is an equality in $M$.
For more details, see \cite{Lallement:1979} or \cite{Ruskuc:1995}.

A well-known direct method to obtain a presentation for a monoid, that we might consider folklore,
is given by the following result; see, for example, \cite[Proposition 1.2.3]{Ruskuc:1995}.

\begin{proposition}\label{provingpresentation}
Let $M$ be a monoid generated by a set $X$.
Then, $\langle X\mid R\rangle$ is a presentation for $M$ if and only
if the following two conditions are satisfied:
\begin{enumerate}
\item
The generating set $X$ of $M$ satisfies all the relations from $R$;
\item
If $w_1,w_2\in X^*$ are any two words such that
the generating set $X$ of $M$ satisfies the relation $w_1=w_2$, then $w_1\sim_R w_2$.
\end{enumerate}
\end{proposition}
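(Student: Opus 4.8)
The plan is to reduce the whole statement to an equality of two congruences on $X^*$ and then exploit the minimality of $\sim_R$. Since $X$ generates $M$, the universal property of the free monoid yields a unique, and necessarily surjective, monoid homomorphism $\phi\colon X^*\to M$ extending the inclusion of $X$ into $M$. Its kernel congruence $\ker\phi=\{(u,v)\in X^*\times X^*\mid u\phi=v\phi\}$ then satisfies $M\cong X^*/{\ker\phi}$ by the first isomorphism theorem for monoids, via the canonical isomorphism sending the class of $w$ to $w\phi$, which respects the image of $X$. The first thing I would record is the translation of the two hypotheses into the language of $\ker\phi$: by definition, the generating set $X$ satisfies a relation $u=v$ precisely when $u\phi=v\phi$, that is, when $(u,v)\in\ker\phi$. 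Consequently, condition (1) says exactly that $R\subseteq\ker\phi$, and condition (2) says exactly that $\ker\phi\subseteq{\sim_R}$.

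With this dictionary in hand, the core of the argument is the equivalence
$$
\text{(1) and (2) hold}\qquad\Longleftrightarrow\qquad \ker\phi={\sim_R}.
$$
For the implication from left to right I would argue as follows: condition (1) gives $R\subseteq\ker\phi$, and since $\ker\phi$ is a congruence containing $R$ while $\sim_R$ is the \emph{smallest} such congruence, minimality forces ${\sim_R}\subseteq\ker\phi$; combining this with condition (2), namely $\ker\phi\subseteq{\sim_R}$, yields $\ker\phi={\sim_R}$. Conversely, if $\ker\phi={\sim_R}$ then $R\subseteq{\sim_R}=\ker\phi$ gives (1), while $\ker\phi={\sim_R}$ gives (2) at once.

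Finally I would connect the equality $\ker\phi={\sim_R}$ to the assertion that $\langle X\mid R\rangle$ is a presentation for $M$. When $\ker\phi={\sim_R}$ the two quotients coincide, so $M\cong X^*/{\ker\phi}=X^*/{\sim_R}$ and the presentation defines $M$. For the reverse direction one starts from an isomorphism $M\cong X^*/{\sim_R}$ realising the presentation, and here is where I expect the only genuine subtlety to lie: one must use that this isomorphism is compatible with the prescribed generators, equivalently that the canonical projection $X^*\to X^*/{\sim_R}$ factors through $\phi$, so that it actually identifies $\ker\phi$ with $\sim_R$ rather than merely producing an abstract isomorphism between two quotient monoids. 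Once that compatibility is made explicit, as is implicit in the convention that $X$ is the fixed generating set, the equality $\ker\phi={\sim_R}$ follows and, together with the equivalence proved above, the proposition is established.
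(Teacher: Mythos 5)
The paper does not actually prove this proposition: it is stated as folklore with a citation to Ru\v{s}kuc's thesis (Proposition 1.2.3), so there is no in-paper argument to compare yours against. On its own merits, your proof is correct and is the standard one: pass to the canonical surjection $\phi\colon X^*\to M$, translate condition (1) into $R\subseteq\ker\phi$ and condition (2) into $\ker\phi\subseteq{\sim_R}$, use minimality of $\sim_R$ to get ${\sim_R}\subseteq\ker\phi$ from (1), and conclude $\ker\phi={\sim_R}$, whence $M\cong X^*/{\sim_R}$ by the first isomorphism theorem. You are also right that the only delicate point is the converse direction: with the paper's literal definition (``$M$ is isomorphic to $X^*/{\sim_R}$'' as an abstract isomorphism) the ``only if'' direction would actually fail --- e.g.\ $S_3$ generated by two transpositions is abstractly isomorphic to $X^*/{\sim_R}$ for $R=\{a^3=1,\,b^2=1,\,(ab)^2=1\}$, yet that generating set does not satisfy $a^3=1$ --- so one must read ``presentation for $M$'' as ``presentation on the given generators'', i.e.\ $\ker\phi={\sim_R}$, exactly as in Ru\v{s}kuc's convention. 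Flagging and resolving that is the right move; nothing is missing.
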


For finite monoids, a standard method to find a presentation
is described by the following result, adapted for the monoid case from
\cite[Proposition 3.2.2]{Ruskuc:1995}.

\begin{theorem}[Guess and Prove method] \label{ruskuc}
Let $M$ be a finite monoid, let $X$ be a generating set for $M$,
let $R\subseteq X^*\times X^*$ be a set of relations and let
$W\subseteq X^*$. Assume that the following conditions are
satisfied:
\begin{enumerate}
\item The generating set $X$ of $M$ satisfies all the relations from $R$;
\item For each word $w\in X^*$, there exists a word $w'\in W$ such that $w\sim_R w'$;
\item $|W|\le|M|$.
\end{enumerate}
Then, $M$ is defined by the presentation $\langle X\mid R\rangle$.
\end{theorem}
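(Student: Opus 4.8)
The plan is to prove Theorem \ref{ruskuc} by reducing it to the criterion already established in Proposition \ref{provingpresentation}. Since conditions (1) of both statements coincide, the entire task is to show that the three hypotheses of the theorem imply condition (2) of the proposition: namely, that whenever $w_1, w_2 \in X^*$ satisfy $w_1 = w_2$ as an equality in $M$, one has $w_1 \sim_R w_2$. Once this implication is secured, Proposition \ref{provingpresentation} yields immediately that $\langle X \mid R\rangle$ is a presentation for $M$.

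First I would set up the natural surjection. Because $X$ generates $M$, there is a canonical monoid epimorphism $\varphi \colon X^* \to M$ sending each word to the element of $M$ it represents. By hypothesis (1), $X$ satisfies all relations in $R$, so $R \subseteq \ker\varphi$ as a relation on $X^*$, and since $\sim_R$ is the smallest congruence containing $R$, we get $\sim_R \,\subseteq\, \ker\varphi$. Equivalently, the map $\varphi$ factors through the quotient, inducing a well-defined epimorphism $\bar\varphi \colon X^*/{\sim_R} \to M$. The goal of condition (2) is precisely to show that $\bar\varphi$ is injective, hence an isomorphism.

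The core of the argument is a counting step built on hypotheses (2) and (3). Condition (2) tells us that every $\sim_R$-class has a representative lying in the distinguished set $W$, so the natural map $W \to X^*/{\sim_R}$ (restriction of the quotient map) is surjective; consequently $|X^*/{\sim_R}| \le |W|$. Combining this with hypothesis (3), $|W| \le |M|$, we obtain
\[
|X^*/{\sim_R}| \le |W| \le |M|.
\]
On the other hand, $\bar\varphi$ is a surjection onto $M$, so $|X^*/{\sim_R}| \ge |M|$. Since $M$ is finite, these inequalities force $|X^*/{\sim_R}| = |M|$, and a surjection between finite sets of equal cardinality is a bijection; thus $\bar\varphi$ is injective. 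Injectivity of $\bar\varphi$ is exactly the statement that $w_1 \varphi = w_2 \varphi$ implies $w_1 \sim_R w_2$, which is condition (2) of Proposition \ref{provingpresentation}.

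The step I expect to require the most care is the finiteness bookkeeping that guarantees $|X^*/{\sim_R}| = |M|$ rather than merely an inequality; this is where the hypothesis that $M$ is \emph{finite} is essential, since the pigeonhole-style collapse of a surjection to a bijection fails without it. The surjectivity of $W \to X^*/{\sim_R}$ deserves a brief justification via condition (2), but it is routine. With both conditions of Proposition \ref{provingpresentation} now verified, that proposition delivers the conclusion that $M$ is defined by $\langle X \mid R\rangle$, completing the proof.
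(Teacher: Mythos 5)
Your argument is correct and complete: the reduction to Proposition \ref{provingpresentation} via the induced surjection $\bar\varphi\colon X^*/{\sim_R}\to M$, followed by the counting chain $|M|\le|X^*/{\sim_R}|\le|W|\le|M|$ and the finiteness pigeonhole, is exactly the standard proof of this result. The paper itself states Theorem \ref{ruskuc} without proof, citing Ru\v{s}kuc's thesis, and your write-up matches the argument given there.
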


Notice that, if $W$ satisfies the above conditions, then, in fact,
$|W|=|M|$.

\smallskip

Let $X$ be an alphabet, let $R\subseteq X^*\times X^*$ be a set of
relations and let $W$ be a subset of $X^*$. We say that $W$ is a set of
\textit{canonical forms} for a finite monoid $M$
if Conditions 2 and 3 of Theorem \ref{ruskuc} are satisfied. Suppose
that the empty word belongs to $W$ and, for each letter $x\in X$ and
for each word $w\in W$, there exists a word $w'\in W$ such that $wx\sim_R w'$.
Then, it is easy to show that $W$ satisfies Condition 2.

In this paper, we will mainly use the method given by Theorem \ref{ruskuc} and the condition described above instead of Condition 2.

\smallskip

A presentation for the symmetric group $\Sym(\Omega_n)$ was determined by Moore \cite{Moore:1897} by the end of the 19th century (1897).
Nearly six decades later (1958), A\u{\i}zen\v{s}tat \cite{Aizenstat:1958}  gave a presentation for
full transformation monoid $\T(\Omega_n)$.
A few years later (1961),
presentations for the partial transformation monoid $\PT(\Omega_n)$
and for the symmetric inverse monoid $\I(\Omega_n)$
were found by Popova \cite{Popova:1961}.
In 1962, A\u{\i}zen\v{s}tat \cite{Aizenstat:1962} and Popova \cite{Popova:1962} exhibited presentations for the monoids of
all order-preserving transformations and of all order-preserving partial transformations of a finite chain, respectively, and from the sixties until our days several authors obtained presentations for many classes of monoids.
Further examples can be found, for instance, in
\cite{
East:2011,
Fernandes:2001,
Fernandes:2002survey,
Fernandes&Gomes&Jesus:2004,
Fernandes&Quinteiro:2016,
Howie&Ruskuc:1995,
Ruskuc:1995}.

\medskip 

We finish this section by recalling the presentations of $\Sym(\Omega_n)$, $\T(\Omega_n)$ and $\PT(\Omega_n)$ aforementioned; see \cite{Fernandes:2002survey}. 

Let $n\geqslant 3$ and $a=\transf{1 & 2 & 3 & \cdots & n \\ 2 & 1 & 3 & \cdots & n}$, 
$b=\transf{1 & 2 & \cdots & n-1 & n \\ 2 & 3 & \cdots &n & 1}$, 
$c=\transf{2 & 3 & \cdots & n \\  2 & 3 & \cdots & n}$ and 
$ e=\transf{1 & 2 & 3 & \cdots & n \\ 1 & 1 & 3 & \cdots & n}$. 
Then, we have: 
for $n\geqslant3$, 
\begin{equation*}
\Sym(\Omega_n)=\langle a,b\mid a^2 = b^n = (ba)^{n-1} = (ab^{n-1}ab)^3 = (ab^{n-j}ab^j)^2 = 1 ~ (2\leqslant j\leqslant n-2) \rangle;  
\end{equation*}  
for $n\geqslant4$, 
\begin{equation*}
\begin{split} 
\T(\Omega_n)=
\langle a,b, e\mid a^2 = b^n = (ba)^{n-1} = (ab^{n-1}ab)^3 = (ab^{n-j}ab^j)^2 = 1 ~ (2\leqslant j\leqslant n-2), \\  
a e = b^{n-2}ab^2 eb^{n-2}ab^2 = bab^{n-1}ab eb^{n-1}abab^{n-1} = ( ebab^{n-1})^2 =  e,  \\
(b^{n-1}ab e)^2 =  eb^{n-1}ab e = ( eb^{n-1}ab)^2, ~ ( ebab^{n-2}ab)^2 = (bab^{n-2}ab e)^2\rangle 
\end{split}
\end{equation*} 
and 
\begin{equation*}
\begin{split} 
\PT(\Omega_n) = \langle a,b, c, e \mid 
 a^2 =b^n =(ba)^{n-1} =(ab^{n-1}ab)^3 =(ab^{n-j}ab^j)^2 =1 ~ (2\leqslant j\leqslant n-2), \\ 
b^{n-1}ab cb^{n-1}ab=ba cab^{n-1}= c= c^2, ~ ( ca)^2= ca c=(a c)^2, \\
 a  e = b^{n-2}ab^2  e b^{n-2}ab^2 = bab^{n-1}ab  e b^{n-1}abab^{n-1} = ( ebab^{n-1})^2 =  e, \\  
 (b^{n-1}ab  e)^2 =  e b^{n-1}ab  e = ( eb^{n-1}ab)^2, ~ ( e bab^{n-2}ab)^2=(bab^{n-2}ab  e)^2, \\ 
  e c= ca c,~  c e= ca,~  ea c= ea, ~ 
  e ab^{n-1}aba  c= ab^{n-1}aba  c ab^{n-1}aba  e ab^{n-1}aba 
\rangle; 
\end{split} 
\end{equation*} 
for $n=3$,  
\begin{equation*}
\begin{split} 
\T(\Omega_3)=
\langle a,b, e\mid a^2 = b^3 = (ba)^2 = (ab^2ab)^3 = 1, ~ 
a e = bab^2ab eb^2abab^2 = ( ebab^2)^2 =  e, ~   \\ 
(b^2ab e)^2 =  eb^2ab e = ( eb^2ab)^2\rangle 
\end{split}
\end{equation*} 
and 
\begin{equation*}
\begin{split} 
\PT(\Omega_3) = \langle a,b, c, e \mid 
 a^2 = b^3 = (ba)^2 = (ab^2ab)^3 = 1, ~ 
b^{2}ab cb^{2}ab=ba cab^{2}= c= c^2, ~ ( ca)^2= ca c=(a c)^2, \\
a e = bab^2ab eb^2abab^2 = ( ebab^2)^2 =  e, ~ (b^2ab e)^2 =  eb^2ab e = ( eb^2ab)^2, \\  
  e c= ca c,~  c e= ca,~  ea c= ea, ~ 
  e ab^{2}aba  c= ab^{2}aba  c ab^{2}aba  e ab^{2}aba 
\rangle.
\end{split} 
\end{equation*} 

\section{Descriptions, regularity and ranks} \label{basics}

Let $\PT^0_{n-1}=\{\alpha\in\T(\Omega_{n-1}^0)\mid\mbox{$0\alpha=0$}\}$, 
$\T^0_{n-1}=\{\alpha\in\T(\Omega_{n-1}^0)\mid\mbox{$0\alpha=0$ and $\Omega_{n-1}\alpha\subseteq\Omega_{n-1}$}\}$
and let $\Sym^0_{n-1}=\Sym(\Omega_{n-1}^0)\cap\T^0_{n-1}$. 
Then, it is clear that $\PT^0_{0}=\T^0_{0}=\T(\Omega_0^0)=\{\transf{0\\0}\}=\Sym(\Omega_0^0)=\Sym^0_0$ and, 
for $n\geqslant2$, 
$\PT^0_{n-1}$ is a submonoid of $\T(\Omega_{n-1}^0)$ isomorphic to $\PT(\Omega_{n-1})$, 
$\T^0_{n-1}$ is a submonoid of $\T(\Omega_{n-1}^0)$ isomorphic to $\T(\Omega_{n-1})$ 
and $\Sym^0_{n-1}$ is a subgroup of $\Sym(\Omega_{n-1}^0)$ isomorphic to $\Sym(\Omega_{n-1})$. 
Moreover, for $n\geqslant1$, it a routine matter to check that:
\begin{itemize}
\item $\sEnd S_n=\End S_n=\T^0_{n-1}\cup\left\{\transf{0&1&\cdots&n-1\\i&0&\cdots&0} \mid i\in\Omega_{n-1}\right\}$; 

\item $\swEnd S_n  =  \End S_n\cup \{\transf{0&1&\cdots&n-1\\i&i&\cdots&i}\mid i\in\Omega_{n-1}^0\}$;  

\item $\wEnd S_n =\{\alpha\in\T(\Omega_{n-1}^0)\mid \mbox{$0\alpha\ne0$ implies $\im\alpha\subseteq\{0,0\alpha\}$}\}$. 
\end{itemize}
Therefore, we have: 
\begin{itemize}
\item $|\End S_n|=(n-1)^{n-1}+n-1$, for $n\geqslant 1$; 

\item $|\swEnd S_n|=(n-1)^{n-1}+2n-1$, for $n\geqslant 2$; 

\item $|\wEnd S_n|=n^{n-1}+(n-1)2^{n-1}$, for $n\geqslant 1$. 
\end{itemize}

Observe that, $\Aut S_1 = \End S_1 = \swEnd S_1 = \wEnd S_1 = \T(\Omega_0^0)= \{\transf{0\\0}\}$, 
$\Aut S_2 = \End S_2 =\Sym(\Omega_1^0) = \{\transf{0&1\\0&1},\transf{0&1\\1&0} \}$  
and $\swEnd S_2 = \wEnd S_2 =\T(\Omega_1^0) = \{\transf{0&1\\0&1},\transf{0&1\\1&0}, \transf{0&1\\0&0},\transf{0&1\\1&1} \}$. 
Furthermore, 
as the automorphisms of a graph are its bijective endomorphisms, for $n\geqslant3$, we have 
\begin{itemize}
\item $\Aut S_n=\Sym^0_{n-1}$ and so $|\Aut S_n|=(n-1)!$. 
\end{itemize} 

\smallskip 

It is obvious that, for $n=1,2$, the monoids $\End S_n$, $\swEnd S_n$ and $\wEnd S_n$ are all regular. 
In fact, it is easy to show that this is the case for any $n\geqslant1$: 

\begin{theorem} 
For $n\geqslant1$,  the monoids $\End S_n$, $\swEnd S_n$ and $\wEnd S_n$ are all regular. 
\end{theorem}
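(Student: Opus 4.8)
The plan is to establish von Neumann regularity directly: for every $\alpha$ in each of the three monoids I will exhibit a $\beta$ in the same monoid with $\alpha\beta\alpha=\alpha$. The explicit descriptions of $\End S_n$, $\swEnd S_n$ and $\wEnd S_n$ recalled above split each monoid into a ``transformation-monoid core'', on which regularity is classical, together with a few easily handled exceptional families, so the whole argument reduces to these pieces. (For $n=1,2$ everything is immediate, but the construction below works uniformly for all $n\geqslant1$.)

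First I would treat the cores. Since $\T^0_{n-1}\cong\T(\Omega_{n-1})$ and $\PT^0_{n-1}\cong\PT(\Omega_{n-1})$, and full and partial transformation monoids are regular (see \cite{Howie:1995}), every element of $\T^0_{n-1}$ has a quasi-inverse inside $\T^0_{n-1}$ and every element of $\PT^0_{n-1}$ has one inside $\PT^0_{n-1}$: regularity is inherited through the isomorphism and the quasi-inverse stays in the subsemigroup. As $\T^0_{n-1}\subseteq\End S_n\subseteq\swEnd S_n$, this covers the bulk of $\End S_n$ and $\swEnd S_n$; and since $\PT^0_{n-1}$ is exactly the set of weak endomorphisms fixing $0$ (the $\wEnd$-condition being vacuous when $0\alpha=0$) and $\PT^0_{n-1}\subseteq\wEnd S_n$, this disposes of all $\alpha\in\wEnd S_n$ with $0\alpha=0$.

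Then I would deal with the exceptional elements by direct computation. Writing $\epsilon_i=\transf{0&1&\cdots&n-1\\i&0&\cdots&0}$ for $i\in\Omega_{n-1}$ and $c_i=\transf{0&1&\cdots&n-1\\i&i&\cdots&i}$ for $i\in\Omega_{n-1}^0$, a short calculation gives $\epsilon_i^3=\epsilon_i$, so each $\epsilon_i$ is its own quasi-inverse, while each $c_i$ is idempotent and hence also its own quasi-inverse. Since $\End S_n=\T^0_{n-1}\cup\{\epsilon_i\}$ and $\swEnd S_n=\End S_n\cup\{c_i\}$, and every quasi-inverse produced so far lies in $\End S_n\subseteq\swEnd S_n$, this settles the first two monoids. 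For the remaining weak endomorphisms, those with $0\alpha=j\neq0$, the defining condition forces $\im\alpha\subseteq\{0,j\}$. If $\im\alpha=\{j\}$ then $\alpha=c_j$ is idempotent; otherwise $\im\alpha=\{0,j\}$, so I pick $b\in\Omega_{n-1}$ with $b\alpha=0$ and take $\beta=\epsilon_b\in\End S_n\subseteq\wEnd S_n$. Chasing the two possible values of $x\alpha\in\{0,j\}$ shows $x(\alpha\epsilon_b)=b$ when $x\alpha=0$ and $x(\alpha\epsilon_b)=0$ when $x\alpha=j$, so that $x(\alpha\epsilon_b\alpha)=b\alpha=0=x\alpha$ in the first case and $x(\alpha\epsilon_b\alpha)=0\alpha=j=x\alpha$ in the second; thus $\alpha\epsilon_b\alpha=\alpha$.

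The main obstacle is precisely this last family of weak endomorphisms with $0\alpha\neq0$: one must choose a quasi-inverse that simultaneously recovers $\alpha$ and itself lies in $\wEnd S_n$. The key observations making it work are that the image of such an $\alpha$ is confined to $\{0,0\alpha\}$, and that the maps $\epsilon_b$ and the constants $c_i$ already belong to $\End S_n$, so no closure problem arises. Everywhere else the regularity of $\T(\Omega_{n-1})$ and $\PT(\Omega_{n-1})$ does the work, and the only computations required are the identities $\epsilon_i^3=\epsilon_i$, $c_i^2=c_i$ and $\alpha\epsilon_b\alpha=\alpha$.
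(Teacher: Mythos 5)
Your proof is correct and follows essentially the same route as the paper's: regularity of the transformation-monoid core, the identity $\epsilon_i^3=\epsilon_i$, idempotency of the constant maps, and the quasi-inverse $\epsilon_b$ for weak endomorphisms with $0\alpha\neq 0$ and $\im\alpha=\{0,0\alpha\}$. The only (immaterial) difference is that for weak endomorphisms fixing $0$ you invoke regularity of $\PT^0_{n-1}\cong\PT(\Omega_{n-1})$ directly, whereas the paper takes a quasi-inverse in the regular monoid $\T(\Omega_{n-1}^0)$ and then modifies it to fix $0$.
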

\begin{proof}
By the previous observation, we may consider $n\geqslant3$. 
As $\T^0_{n-1}$ is a submonoid of $\T(\Omega_{n-1}^0)$ isomorphic to $\T(\Omega_{n-1})$ and it is well known that this last monoid is regular, 
each element of $\T^0_{n-1}$ is regular in $\T^0_{n-1}$ and so also regular in $\End S_n$, $\swEnd S_n$ and $\wEnd S_n$. 

Let $\alpha_i=\transf{0&1&\cdots&n-1\\i&0&\cdots&0}$ for $1\leqslant i\leqslant n-1$. 
Then, $\alpha_i^3=\alpha_i$, whence $\alpha_i$ is regular in $\End S_n$ and so $\alpha_i$ is also regular in $\swEnd S_n$ and $\wEnd S_n$, 
for all $1\leqslant i\leqslant n-1$. 
We can conclude right away that $\End S_n$ is a regular monoid.

Let $\varepsilon_i=\transf{0&1&\cdots&n-1\\i&i&\cdots&i}$ for $0\leqslant i\leqslant n-1$. Then, $\varepsilon_i$ is an idempotent, 
whence $\varepsilon_i$ is regular in $\swEnd S_n$ and so $\varepsilon_i$ is also regular in $\wEnd S_n$, 
for all $1\leqslant i\leqslant n-1$. At this point, we can conclude that $\swEnd S_n$ is a regular monoid.

Now, let $\alpha\in\wEnd S_n$. 

First, suppose that $0\alpha=0$ and take $\beta\in \T(\Omega_{n-1}^0)$ such that $\alpha=\alpha\beta\alpha$. 
Notice that such $\beta$ exists since $\T(\Omega_{n-1}^0)$ is a regular monoid. 
Define $\bar\beta\in\T(\Omega_{n-1}^0)$ by $0\bar\beta=0$ and $i\bar\beta=i\beta$ for $i\in\Omega_{n-1}$. 
Then, $\bar\beta\in\wEnd S_n$ and $\alpha=\alpha\bar\beta\alpha$, 
whence $\alpha$ is regular in $\wEnd S_n$. 

Next, suppose that $0\alpha\neq0$. If $\im\alpha=\{0\alpha\}$, then $\alpha$ is an idempotent and so $\alpha$ is a regular element. 
Hence, suppose that $\im\alpha=\{0,0\alpha\}$. Take $i\in0\alpha^{-1}$ and let $\beta=\transf{0&1&\cdots&n-1\\i&0&\cdots&0}\in\wEnd S_n$. 
Therefore, $\alpha=\alpha\beta\alpha$ and so $\alpha$ is regular in $\wEnd S_n$, as required. 
\end{proof} 

\smallskip 

From now on and throughout this paper we will always consider $n\geqslant 3$. 
Observe that, in this case, we have $\Aut S_n\subsetneq \End S_n\subsetneq \swEnd S_n\subsetneq \wEnd S_n$. 

\smallskip 

Let us define
$$
a_0=\begin{pmatrix}
0&1 & 2 & 3 & \cdots & n-1 \\
0&2 & 1 & 3 &\cdots & n-1
\end{pmatrix}, ~ 
b_0=\begin{pmatrix}
0&1 & 2 & \cdots & n-2 &n-1 \\
0&2 & 3 & \cdots & n-1 & 1
\end{pmatrix}
~\text{and}~
e_0=\begin{pmatrix}
0&1 & 2 & 3 & \cdots & n -1\\
0&1 & 1 & 3 & \cdots & n-1
\end{pmatrix}. 
$$
Then,  $\{a_0,b_0,e_0\}$ is a generating set with minimum size of the monoid $\T^0_{n-1}$ (notice that, $a_0=b_0$ for $n=3$); see \cite{Fernandes:2002survey}. 
Let us also consider the following transformations of $\T(\Omega_{n-1}^0)$:
$$
c_0=\begin{pmatrix}
0 & 1 & 2 & \cdots & n-1 \\
0 & 0 & 2 &\cdots & n-1
\end{pmatrix},~
z=\begin{pmatrix}
0 & 1 & 2 & \cdots & n -1\\
1 & 0 & 0 & \cdots & 0
\end{pmatrix}
~\text{and}~
z_0=\begin{pmatrix}
0 & 1 & 2 & \cdots & n -1\\
0 & 0 & 0 & \cdots & 0
\end{pmatrix}.
$$
Notice that,  $\{a_0,b_0,e_0,c_0\}$ is a generating set with minimum size of the monoid $\PT^0_{n-1}$; see \cite{Fernandes:2002survey}. 
Moreover, it is easy to check that $a_0,b_0,e_0,c_0,z,z_0\in\wEnd(S_n)$. Furthermore, we have: 

\begin{theorem} 
For $n\geqslant4$, $\{ a_0,b_0,e_0,z \}$, $\{ a_0,b_0,e_0,z,z_0 \}$ and $\{ a_0,b_0,e_0,c_0,z \}$ are generating sets with minimum size of 
$\End S_n$, $\swEnd S_n$ and $\wEnd S_n$, respectively. Consequently, $\End S_n$, $\swEnd S_n$ and $\wEnd S_n$ have ranks $4$, $5$ and $5$, respectively.  
\end{theorem}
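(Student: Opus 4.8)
The plan is to establish, for each of the three monoids, a matching upper and lower bound on the rank, and then to observe that since the displayed sets generate and have exactly the size of the rank, they are generating sets of minimum size. For the upper bounds I would invoke the two facts recalled above, that $\T^0_{n-1}=\langle a_0,b_0,e_0\rangle$ and $\PT^0_{n-1}=\langle a_0,b_0,e_0,c_0\rangle$. Using that $b_0$ acts on $\Omega_{n-1}$ as the cycle $1\mapsto2\mapsto\cdots\mapsto n-1\mapsto1$ and fixes $0$, a one-line computation gives $\alpha_i=zb_0^{\,i-1}$ for $1\le i\le n-1$, so that $\End S_n=\T^0_{n-1}\cup\{\alpha_1,\dots,\alpha_{n-1}\}=\langle a_0,b_0,e_0,z\rangle$. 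Since $z_0=\varepsilon_0$ and $\varepsilon_i=z_0\alpha_i$, adjoining $z_0$ yields $\swEnd S_n=\End S_n\cup\{\varepsilon_0,\dots,\varepsilon_{n-1}\}=\langle a_0,b_0,e_0,z,z_0\rangle$. For $\wEnd S_n$ the maps fixing $0$ form exactly $\PT^0_{n-1}$; for a map $\alpha$ with $0\alpha=i\ne0$ and $\im\alpha\subseteq\{0,i\}$ I would first handle $i=1$, noting that for $\gamma\in\PT^0_{n-1}$ one has $x(\gamma z)=1$ precisely when $x=0$ or $x\gamma=0$, so choosing $\gamma$ to send to $0$ exactly the nonzero points that $\alpha$ sends to $1$ gives the case $i=1$, and then post-composing with $b_0^{\,i-1}$ gives general $i$. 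Hence $\wEnd S_n=\langle a_0,b_0,e_0,c_0,z\rangle$.

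For the lower bounds I would argue by counting ``features'', i.e.\ pairwise disjoint classes of elements each of which every generating set must meet. Two elementary facts drive everything. First, $|\im(\alpha\beta)|\le\min(|\im\alpha|,|\im\beta|)$, so image size never increases along a product; consequently an element of image size $k$ can only be obtained when all factors have image size at least $k$ and at least one has image size exactly $k$. Second, the maps fixing $0$, as well as each of the submonoids $\T^0_{n-1}$, $\PT^0_{n-1}$, $\End S_n$, are closed under composition, and the non-units form an ideal. From the latter, since in all three monoids the group of units is $\Aut S_n=\Sym^0_{n-1}\cong\Sym(\Omega_{n-1})$, which has rank $2$ for $n\ge4$, the unit generators alone must generate $\Aut S_n$, forcing at least two generators of image size $n$. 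From the image-size fact, an element of image size $n-1$ forces a generator of image size exactly $n-1$. From closure under composition, a map with $0\alpha\ne0$ forces a generator moving $0$, since a product of maps fixing $0$ fixes $0$.

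I would then assemble the feature lists. For $\End S_n$ the three features --- two units (image size $n$), one map of image size $n-1$, and one $\alpha_j$ (image size $2$, moving $0$) --- have pairwise distinct image sizes for $n\ge4$, giving rank $\ge4$. For $\swEnd S_n$ I add a constant $\varepsilon_l$ (image size $1$): a constant is not a product of elements of $\End S_n$, whose minimum image size is $2$, so a constant generator is unavoidable; moreover the $\alpha_j$ feature persists, for if every $0$-moving generator were constant then the first $0$-moving factor in a product equal to $\alpha_i$ would already turn the product into a constant, contradicting $|\im\alpha_i|=2$. The four image sizes $n,n-1,2,1$ are distinct for $n\ge4$, so rank $\ge5$. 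For $\wEnd S_n$ the size-$(n-1)$ level carries two disjoint features: the maps fixing $0$ that send no nonzero point to $0$ (the $\T$-type, e.g.\ $e_0$) and those that do (the partial type, e.g.\ $c_0$); reproducing inside $\wEnd S_n$ the argument that $\rank\PT^0_{n-1}=4$, a $\T$-type map of image size $n-1$ needs a $\T$-type generator while a partial-type one needs a partial-type generator, and neither is obtainable from the other together with units. With the obligatory $0$-moving generator (image size at most $2$) these give five pairwise distinct features, so rank $\ge5$.

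The step I expect to be the main obstacle is the $\wEnd S_n$ lower bound, and within it the separation of the two size-$(n-1)$ features. The delicate point is to rule out that a $0$-moving generator --- which, like the partial type, does send nonzero points to $0$ --- could play the role of the partial-type generator; this is resolved by observing that any $0$-moving map has image size at most $2<n-1$, so it cannot occur as a factor of a product of image size $n-1$, and the whole analysis of that level therefore takes place inside $\PT^0_{n-1}$, where it reduces to the classical dichotomy used to compute the rank of $\PT(\Omega_{n-1})$.
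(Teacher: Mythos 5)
Your proposal is correct and follows essentially the same route as the paper: the generation part uses the same idea of reducing to the known generating sets of $\T^0_{n-1}$ and $\PT^0_{n-1}$ and factoring the $0$-moving elements through $z$ (your decompositions $\alpha_i=zb_0^{i-1}$, $\varepsilon_i=z_0\alpha_i$ and $\gamma z b_0^{i-1}$ are trivial variants of the paper's $z\transf{0&1&\cdots&n-1\\0&i&\cdots&i}$, $z_0\alpha_i$ and $\alpha_0 z\beta$). For the minimality the paper only asserts that it "is easy to conclude"; your feature-counting argument (two unit generators for $\Sym^0_{n-1}$, a generator of image size exactly $n-1$ --- split in the $\wEnd S_n$ case into the $\T$-type/partial-type dichotomy, correctly insulated from the $0$-moving generators by the image-size bound $2<n-1$ --- a non-constant $0$-moving generator, and, for $\swEnd S_n$, a constant) is a correct and complete filling-in of that omitted step.
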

\begin{proof}
Since $\{a_0,b_0,e_0\}$ is a generating set of the monoid $\T^0_{n-1}$ and, for all $i\in\Omega_{n-1}$, 
$\transf{0&1&\cdots&n-1\\i&0&\cdots&0}=z\transf{0&1&\cdots&n-1\\0&i&\cdots&i}$ and 
$\transf{0&1&\cdots&n-1\\i&i&\cdots&i}=z_0\transf{0&1&\cdots&n-1\\i&0&\cdots&0}$,  
it immediately follows that $\{ a_0,b_0,e_0,z \}$ and $\{ a_0,b_0,e_0,z,z_0 \}$ are generating sets of $\End S_n$ and $\swEnd S_n$, respectively. 

Next, observe that $\wEnd S_n = \PT_{n-1}^0\cup \{\alpha\in\T(\Omega_{n-1}^0)\mid \mbox{$0\alpha\ne0$ and $\im\alpha\subseteq\{0,0\alpha\}$}\}$. 
Since $\PT_{n-1}^0$ is generated by $\{a_0,b_0,e_0,c_0\}$, 
in order to show that $\{a_0,b_0,e_0,c_0,z\}$ generates $\wEnd S_n$, 
it suffices to show that 
$\{\alpha\in\T(\Omega_{n-1}^0)\mid \mbox{$0\alpha\ne0$ and $\im\alpha\subseteq\{0,0\alpha\}$}\}$ is contained in the monoid generated by $\{a_0,b_0,e_0,c_0,z\}$. 
So, take $\alpha\in\T(\Omega_{n-1}^0)$ such that $0\alpha\ne0$ and $\im\alpha\subseteq\{0,0\alpha\}$. Then, 
$\alpha=\transf{0&A&B\\0\alpha& 0\alpha & 0}$, 
for some (disjoint) subsets $A$ and $B$ of $\Omega_{n-1}$ such that $A\cup B=\Omega_{n-1}$. 
Let $\alpha_0=\transf{0&A&B\\0& 0 & 0\alpha}$ and $\beta=\transf{0&1&\cdots&n-1\\0&0\alpha&\cdots&0\alpha}$. 
Then, $\alpha_0,\beta\in \PT_{n-1}^0$ and $\alpha=\alpha_0 z \beta$, which concludes the proof. 

Finally, it is easy to conclude that any generating set of $\End S_n$ cannot have less than four elements and that both 
$\swEnd S_n$ and $\wEnd S_n$ cannot be generated by less than five elements. 
\end{proof}

For $n=3$, as $b_0=a_0$ and $e_0=z^2$, it is a routine matter to conclude that 
$\{ a_0,z \}$, $\{ a_0,z,z_0 \}$ and $\{ a_0,c_0,z \}$ are generating sets with minimum size of $\End S_3$, $\swEnd S_3$ and $\wEnd S_3$, respectively.  
Therefore, $\End S_3$, $\swEnd S_3$ and $\wEnd S_3$ have ranks $2$, $3$ and $3$, respectively. 

\section{Presentations}\label{presentations} 

Recall that $\T^0_{n-1}$ and $\PT^0_{n-1}$ are monoids isomorphic to $\T(\Omega_{n-1})$ and $\PT(\Omega_{n-1})$, respectively. 

\smallskip 

Let $\langle a_0,b_0,e_0\mid R\rangle$ be a presentation of  $\T^0_{n-1}$ on the generators $a_0$, $b_0$ and $e_0$. 
Then, we have: 

\begin{theorem} \label{endpres}
For $n\geqslant 4$, 
$$
\langle a_0,b_0,e_0,z \mid R, a_0z = b_0z = e_0z = z,~ z^2=(e_0b_0)^{n-3}e_0 
\rangle
$$ 
is a presentation of $\End S_n$ on the generators $a_0$, $b_0$, $e_0$ and $z$. 
\end{theorem}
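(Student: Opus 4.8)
The plan is to invoke the Guess and Prove method (Theorem~\ref{ruskuc}) with generating set $X=\{a_0,b_0,e_0,z\}$ and the given relation set, call it $R'=R\cup\{a_0z=b_0z=e_0z=z,\ z^2=(e_0b_0)^{n-3}e_0\}$. Condition~1 (the relations hold in $\End S_n$) is routine: the relations from $R$ hold because $\langle a_0,b_0,e_0\mid R\rangle$ presents $\T^0_{n-1}$, and the four new relations are verified by direct computation of the transformations. Indeed $a_0z=b_0z=e_0z=z$ holds because each of $a_0,b_0,e_0$ fixes $0$, so postcomposing with them leaves $z$ (whose only nonzero value is $0z=1$) unchanged; and $z^2$ sends $0\mapsto 1\mapsto 0$ and everything else to $0$, which one checks equals the idempotent $(e_0b_0)^{n-3}e_0$ collapsing $\Omega_{n-1}$ onto a single point while fixing $0$. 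The real work is Conditions~2 and~3.

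\textbf{Canonical forms.} I would take as the candidate set of canonical forms
$$
W=W_{\T}\cup\{z\,w\mid w\in W_{\T}\},
$$
where $W_{\T}\subseteq\{a_0,b_0,e_0\}^*$ is a set of canonical forms for $\T^0_{n-1}$ with respect to $R$ (which exists since $R$ presents $\T^0_{n-1}$). The intuition matches the structure of $\End S_n=\T^0_{n-1}\cup\{\alpha_i\mid i\in\Omega_{n-1}\}$ exhibited in Section~\ref{basics}: words not involving $z$ give elements of $\T^0_{n-1}$, while the ``extra'' maps $\alpha_i=\transf{0&1&\cdots&n-1\\i&0&\cdots&0}$ all factor through $z$, since $\alpha_i=z\transf{0&1&\cdots&n-1\\0&i&\cdots&i}$ and the second factor lies in $\T^0_{n-1}$. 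For Condition~3, this gives $|W|\le 2|\T^0_{n-1}|=2(n-1)^{n-1}$, but I must sharpen this to $|W|\le|\End S_n|=(n-1)^{n-1}+(n-1)$; I would therefore cut $W$ down so that only $n-1$ of the words $zw$ survive, one for each map $\alpha_i$. Concretely, using $a_0z=b_0z=e_0z=z$ one shows $zw\sim_{R'}z$ whenever $w$ begins (after the initial $z$) with a generator, so the image of $zw$ is governed only by where $w$ sends the single point in $\im z\cap\Omega_{n-1}$; this should collapse the $zw$-words down to exactly $n-1$ distinct canonical forms.

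\textbf{Confluence (Condition~2).} To verify Condition~2 I would use the reduction criterion stated after Theorem~\ref{ruskuc}: the empty word lies in $W$, and for each generator $x\in\{a_0,b_0,e_0,z\}$ and each $w\in W$ I must produce $w'\in W$ with $wx\sim_{R'}w'$. Appending $a_0,b_0,e_0$ to a $\T$-word stays inside $W_{\T}$ by the canonical-form property of $W_{\T}$; the new cases are appending $z$, and appending any generator to a $zw$-word. Appending $z$ to $w\in W_{\T}$ is handled by rewriting $wz$ using the relations $a_0z=b_0z=e_0z=z$ letter by letter from the right, reducing $wz$ to $z$, and using $z^2=(e_0b_0)^{n-3}e_0$ to eliminate any $z^2$ that arises. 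The step I expect to be the main obstacle is precisely this bookkeeping for the $zw$ words: I must show that right-multiplying $zw$ (with $w\in W_{\T}$) by any generator again lands in one of the $n-1$ retained canonical forms, which amounts to tracking how the congruence class of $zw$ is determined solely by the single value $1w\in\Omega_{n-1}^0$, and checking the boundary case $1w=0$ (giving the constant-to-$0$ behaviour, i.e.\ reduction back into $W_{\T}$). Once this closure is established, all three conditions of Theorem~\ref{ruskuc} hold and the presentation follows.
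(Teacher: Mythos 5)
Your overall strategy (Guess and Prove with $W=W_{\T}\cup\{zw\mid w\in W_{\T}\}$, subsequently pruned so that only $n-1$ of the $z$-words survive) is workable and genuinely different from the paper's, which instead applies Proposition \ref{provingpresentation} directly: it reduces an arbitrary word to the form $u$ or $zu$ with $u\in\{a_0,b_0,e_0\}^*$ and then compares two such reduced words in a four-case analysis. However, your proposal has a genuine gap at precisely the step you yourself flag as ``the main obstacle'', and the one concrete claim you offer towards it is false. You assert that ``using $a_0z=b_0z=e_0z=z$ one shows $zw\sim_{R'}z$ whenever $w$ begins (after the initial $z$) with a generator''. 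Those relations only absorb letters standing to the \emph{left} of $z$ (they yield $uz\sim z$ for all $u\in\{a_0,b_0,e_0\}^*$); they say nothing about $z$ followed by a generator. Indeed $za_0=\transf{0&1&\cdots&n-1\\ 2&0&\cdots&0}\neq z$ in $\End S_n$, and since the generators do satisfy all the relations, $za_0\not\sim_{R'}z$. Were your claim true, all the $z$-words would collapse to the single class of $z$, which contradicts your own (correct) observation that $n-1$ of them must remain distinct.

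What is actually needed, and what you never supply, is a syntactic reason why $zu_1\sim zu_2$ whenever $1u_1=1u_2$ --- equivalently, why your pruned set of $n-1$ words is closed, modulo $\sim$, under appending each generator. The relation $z^2=(e_0b_0)^{n-3}e_0$ is the tool, and the paper's trick is: from $zu_1=zu_2$ in $\End S_n$ deduce $z^2u_1=z^2u_2$, i.e. $(e_0b_0)^{n-3}e_0u_1=(e_0b_0)^{n-3}e_0u_2$, which is an equality inside $\T^0_{n-1}$ and hence a consequence of $R$; then multiply on the left by $z$ and use $z^3\sim z$ (itself derived from $z^3=z^2\cdot z\sim(e_0b_0)^{n-3}e_0z\sim z$) to recover $zu_1\sim zu_2$. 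In your canonical-forms setting the same difficulty can be contained by choosing $v_i$ to be a fixed word of $\{a_0,b_0,e_0\}^*$ for the constant map $\transf{0&1&\cdots&n-1\\ 0&i&\cdots&i}$: then $v_ix$ and $v_{ix}$ are already equal in $\T^0_{n-1}$, so $zv_ix\sim zv_{ix}$ via $R$ alone, and the remaining case (appending $z$ to a word of $W_{\T}$) needs $wz\sim z\sim zv_1$, the second equivalence again coming from $z^3\sim z$ with $v_1=(e_0b_0)^{n-3}e_0$. Without one of these arguments the decisive closure step, and hence the proof, is incomplete.
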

\begin{proof} 
First, notice that, it is a routine matter to show that the set of generators $\{a_0,b_0,e_0,z\}$ satisfies all the relations 
$R$, $b_0z = a_0z = e_0z = z$ and $z^2=(e_0b_0)^{n-3}e_0$. 

Denote by $\sim_R$ the congruence  on $\{a_0,b_0,e_0\}^*$ generated by $R$ and by $\sim$ the congruence on $\{a_0,b_0,e_0,z\}^*$ 
generated by $R\cup\{b_0z = a_0z = e_0z = z, z^2=(e_0b_0)^{n-3}e_0\}$.  

Let $w\in\{a_0,b_0,e_0,z\}^*$. Then $w =  u_1zu_2z\cdots u_kzu$, for some $u_1,u_2,\ldots,u_k,u\in\{a_0,b_0,e_0\}^*$ and $k\geqslant0$. 
As a consequence of the relations $b_0z = a_0z = e_0z = z$, we have $u_iz\sim z$, for $1\leqslant i\leqslant k$, and so 
$w\sim z^ku$. On the other hand, considering also the relation $z^2=(e_0b_0)^{n-3}e_0$, 
we also get $z^3 =  z^2z\sim (e_0b_0)^{n-3}e_0z\sim z$, 
whence $z^ku\sim zu$, if $k$ is odd,  and $z^ku\sim z^2u\sim (e_0b_0)^{n-3}e_0u$, if $k$ is an even integer greater than zero. 
Therefore, we obtain $w\sim zu'$ or $w\sim u'$, for some $u'\in\{a_0,b_0,e_0\}^*$. 

Now, let $w_1,w_2\in\{a_0,b_0,e_0,z\}^*$ be such that $w_1=w_2$ (equality in $\End S_n$). Our goal is to show that $w_1\sim w_2$, with which the proof is concluded. 

Let $u_1,u_2\in\{a_0,b_0,e_0\}^*$ be such that $w_i\sim u_i$ or $w_i\sim zu_i$, for $i=1,2$. Hence, we may consider four cases. 

\noindent{\sc case 1:} $w_1\sim u_1$ and $w_2\sim u_2$. 

Then, $u_1=w_1=w_2=u_2$ (equalities in $\End S_n$) and so $u_1=u_2$ (equality in $\mathscr{T}^0_{n-1}$). 
Since $\langle a_0,b_0,e_0\mid R\rangle$ is a presentation of  $\mathscr{T}^0_{n-1}$, we have $u_1\sim_R u_2$, 
whence $u_1\sim u_2$ and so $w_1\sim w_2$. 

\noindent{\sc case 2:} $w_1\sim u_1$ and $w_2\sim zu_2$. 
In this case, we have 
$$
\transf{0 & 1 & \cdots & n-1 \\ 0 & 1u_1 & \cdots & (n-1)u_1}=u_1=w_1=
w_2=zu_2=\transf{0 & 1 & \cdots & n-1 \\ 1 & 0 & \cdots & 0}\transf{0 & 1 & \cdots & n-1 \\ 0 & 1u_2 & \cdots & (n-1)u_2}= 
\transf{0 & 1 & \cdots & n-1 \\ 1u_2 & 0 & \cdots & 0}, 
$$
whence $u_1=\transf{0 & 1 & \cdots & n-1 \\ 0 & 0 & \cdots & 0}\not\in\mathscr{T}^0_{n-1}$,
which is a contradiction. Thus, this case does not occur. 

\noindent{\sc case 3:} $w_1\sim zu_1$ and $w_2\sim u_2$. 

Similar to the previous case, this case does not occur.

\noindent{\sc case 4:} $w_1\sim zu_1$ and $w_2\sim zu_2$. 

Then, $zu_1=w_1=w_2=zu_2$ (equalities in $\End S_n$), whence $zu_1=zu_2$ and so $z^2u_1=z^2u_2$. 
Since $z^2=(e_0b_0)^{n-3}e_0$, we get $(e_0b_0)^{n-3}e_0u_1=(e_0b_0)^{n-3}e_0u_2$, which is an equality in $\mathscr{T}^0_{n-1}$. 
Therefore, as $\langle a_0,b_0,e_0\mid R\rangle$ is a presentation of  $\mathscr{T}^0_{n-1}$, we have 
$(e_0b_0)^{n-3}e_0u_1\sim_R (e_0b_0)^{n-3}e_0u_2$, whence $(e_0b_0)^{n-3}e_0u_1\sim (e_0b_0)^{n-3}e_0u_2$ and so $z^2u_1\sim z^2u_2$, 
from which follows that $z^3u_1\sim z^3u_2$ and, as $z^3\sim z$, finally, that $zu_1\sim zu_2$. Thus $w_1\sim w_2$, by transitivity, as required. 
\end{proof}

\smallskip 

Next, let $\langle a_0,b_0,e_0,z\mid Q\rangle$ be a presentation of  $\End S_n$ on the generators $a_0$, $b_0$, $e_0$ and $z$. 
Hence, we get: 

\begin{theorem} For $n\geqslant 4$, 
$$
\langle a_0,b_0,e_0,z,z_0 \mid Q, 
a_0z_0=b_0z_0=e_0z_0=zz_0=z_0^2=z_0a_0=z_0b_0=z_0e_0=z_0 
\rangle
$$ 
is a presentation of $\swEnd S_n$ on the generators $a_0$, $b_0$, $e_0$, $z$ and $z_0$. 
\end{theorem}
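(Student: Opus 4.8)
The plan is to apply Proposition \ref{provingpresentation}, following closely the template of the proof of Theorem \ref{endpres}. Write $X=\{a_0,b_0,e_0,z,z_0\}$, let $\sim$ be the congruence on $X^*$ generated by $Q$ together with the new relations, and let $\sim_Q$ be the congruence on $\{a_0,b_0,e_0,z\}^*$ generated by $Q$, so that the $\sim_Q$-classes correspond bijectively to the elements of $\End S_n$. Recall from Section \ref{basics} that $\swEnd S_n=\End S_n\cup\{\varepsilon_i\mid i\in\Omega_{n-1}^0\}$, where $\varepsilon_i=\transf{0&1&\cdots&n-1\\i&i&\cdots&i}$, that $z_0=\varepsilon_0$, and that the $n$ constants $\varepsilon_0,\ldots,\varepsilon_{n-1}$ are pairwise distinct and none of them lies in $\End S_n$. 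First I would check (routine) that $X$ satisfies all the relations in $\swEnd S_n$; the only points worth noting are $z_0x=z_0$ for $x\in\{a_0,b_0,e_0\}$ and $xz_0=z_0$ for every $x\in X$, both immediate from the fact that $z_0$ is the constant map onto $0$.

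Second I would establish a normal-form reduction. Since every generator $x$ satisfies $xz_0\sim z_0$, a straightforward induction on the length of $w'$ (absorbing the letter adjacent to $z_0$) shows that $w'z_0\sim z_0$ for every $w'\in X^*$; hence any word containing $z_0$ is $\sim$-equivalent to $z_0u$ for some $u\in\{a_0,b_0,e_0,z\}^*$, while a word with no occurrence of $z_0$ already lies in $\{a_0,b_0,e_0,z\}^*$. Next, since $Q$ presents $\End S_n=\T^0_{n-1}\cup\{\alpha_i\mid i\in\Omega_{n-1}\}$, every $u\in\{a_0,b_0,e_0,z\}^*$ represents either an element of $\T^0_{n-1}$, which is also represented by some $u'\in\{a_0,b_0,e_0\}^*$, or some $\alpha_i$, which is represented by $zb_0^{i-1}$; thus $u\sim_Q u'$ or $u\sim_Q zu'$ for some $u'\in\{a_0,b_0,e_0\}^*$. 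Combining this with the relations $z_0a_0=z_0b_0=z_0e_0=z_0$, which give $z_0u'\sim z_0$, I obtain $z_0u\sim z_0$ or $z_0u\sim z_0zu'$. Therefore every $w\in X^*$ is $\sim$-equivalent to one of three shapes: (I) a word $u\in\{a_0,b_0,e_0,z\}^*$; (II) $z_0$; or (III) $z_0zu'$ with $u'\in\{a_0,b_0,e_0\}^*$.

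Third I would run the uniqueness argument. Suppose $w_1=w_2$ in $\swEnd S_n$ and reduce each $w_i$ to one of the three shapes; since $\sim$-equivalent words are equal in $\swEnd S_n$, the two reduced words represent the same element. A shape-(I) word represents an element of $\End S_n$, shape (II) represents $\varepsilon_0$, and shape (III) represents $z_0(zu')=\varepsilon_{1u'}$, a constant with value in $\Omega_{n-1}$; as the $\varepsilon_i$ are pairwise distinct and disjoint from $\End S_n$, every pairing of distinct shapes is impossible. The (I,I) case gives $u_1=u_2$ in $\End S_n$, whence $u_1\sim_Q u_2$ and so $w_1\sim w_2$, while the (II,II) case is trivial. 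The remaining case, (III,III), is the main obstacle: from $w_1=w_2$ I only extract $\varepsilon_{1u_1'}=\varepsilon_{1u_2'}$, that is $1u_1'=1u_2'$, which is far weaker than equality of $u_1'$ and $u_2'$ in $\T^0_{n-1}$. The resolution is the observation that collapses this ambiguity: for any $u'\in\{a_0,b_0,e_0\}^*$ one computes in $\End S_n$ that $zu'=\alpha_{1u'}$, since $0\mapsto1\mapsto1u'$ while every $j\in\Omega_{n-1}$ satisfies $j\mapsto0\mapsto0$ (as $u'$ fixes $0$). Hence $1u_1'=1u_2'$ forces $zu_1'=zu_2'=\alpha_{1u_1'}$ in $\End S_n$, so $zu_1'\sim_Q zu_2'$, and multiplying on the left by $z_0$ yields $z_0zu_1'\sim z_0zu_2'$, i.e. $w_1\sim w_2$. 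This closes all cases, and Proposition \ref{provingpresentation} then finishes the proof.
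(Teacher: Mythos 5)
Your proposal is correct and follows essentially the same route as the paper's proof: the same absorption of $z_0$ to the left via $xz_0\sim z_0$, the same reduction of the remaining suffix to $u'$ or $zu'$ with $u'\in\{a_0,b_0,e_0\}^*$ using the presentation $Q$ of $\End S_n$, and the same case analysis on the resulting normal forms, including the key observation $zu'=\alpha_{1u'}$ that resolves the constant-map case. The only differences are presentational (an explicit induction in place of the paper's factorisation $w=u_1z_0\cdots u_kz_0u$, and a more explicit justification of the $u\sim_Q u'$ or $u\sim_Q zu'$ step), so no changes are needed.
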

\begin{proof} 
We begin by observing that we can routinely show that the set of generators $\{a_0,b_0,e_0,z,z_0\}$ satisfies all the relations 
$Q$ and  $a_0z_0=b_0z_0=e_0z_0=zz_0=z_0^2=z_0a_0=z_0b_0=z_0e_0=z_0$. 

Denote by $\sim_Q$ the congruence  on $\{a_0,b_0,e_0,z\}^*$ generated by $Q$ and by $\sim$ the congruence on $\{a_0,b_0,e_0,z,z_0\}^*$ 
generated by $Q\cup\{a_0z_0=b_0z_0=e_0z_0=zz_0=z_0^2=z_0a_0=z_0b_0=z_0e_0=z_0\}$.  

Let $w\in\{a_0,b_0,e_0,z,z_0\}^*\setminus\{a_0,b_0,e_0,z\}^*$. 
Then, $w =  u_1z_0u_2z_0\cdots u_kz_0u$, for some $u_1,u_2,\ldots,u_k,u\in\{a_0,b_0,e_0,z\}^*$ and $k\geqslant1$. 
As a consequence of the relations $a_0z_0=b_0z_0=e_0z_0=zz_0=z_0$, we get $u_iz_0\sim z_0$, for $1\leqslant i\leqslant k$, and so 
$w\sim z_0^ku\sim z_0u$, also in view of the relation $z_0^2=z_0$. 
On the other hand, we showed in the proof of Theorem \ref{endpres} that $u\sim_Q zu'$ or $u\sim_Q u'$, for some $u'\in\{a_0,b_0,e_0\}^*$. 
Hence, $w\sim z_0zu'$ or $w\sim z_0u'\sim z_0$, for some $u'\in\{a_0,b_0,e_0\}^*$, 
also taking into account the relations $z_0a_0=z_0b_0=z_0e_0=z_0$. 
Observe that, as transformation, we obtain 
$w=\transf{0 & 1 & \cdots & n-1 \\ i & i & \cdots & i}$, for some $i\in\Omega_{n-1}^0$, and so $w\not\in\End S_n$. 

Now, let $w_1,w_2\in\{a_0,b_0,e_0,z,z_0\}^*$ be such that $w_1=w_2$ (equality in $\swEnd S_n$). We aim to show that $w_1\sim w_2$. 
By the last observation above, we can conclude that $w_1,w_2\in\{a_0,b_0,e_0,z\}^*$ or $w_1,w_2\in\{a_0,b_0,e_0,z,z_0\}^*\setminus\{a_0,b_0,e_0,z\}^*$. 
If  $w_1,w_2\in\{a_0,b_0,e_0,z\}^*$, then $w_1\sim_Q w_2$ and so $w_1\sim w_2$. 
So, suppose that $w_1,w_2\in\{a_0,b_0,e_0,z,z_0\}^*\setminus\{a_0,b_0,e_0,z\}^*$.  
Hence, as transformation, 
$w_1=w_2=\transf{0 & 1 & \cdots & n-1 \\ i & i & \cdots & i}$, for some $i\in\Omega_{n-1}^0$. 
If $i=0$, then we must have $w_1\sim z_0\sim w_2$. On the other hand, suppose that $i\in\Omega_{n-1}$. 
Then, $w_1\sim z_0zu'_1$ and $w_2\sim z_0zu'_2$, for some $u'_1,u'_2\in\{a_0,b_0,e_0\}^*$, 
and (as transformations) $zu'_1=zu'_2=\transf{0 & 1 & \cdots & n-1 \\ i & 0 & \cdots & 0}$. 
Thus, $zu'_1\sim_Q zu'_2$ and so $w_1\sim z_0zu'_1\sim z_0zu'_2\sim w_2$, as required. 
\end{proof}

\smallskip 

Now, let $\langle a_0,b_0,e_0,c_0\mid S\rangle$ be a presentation of  $\PT^0_{n-1}$ on the generators $a_0$, $b_0$, $e_0$ and $c_0$. 
Thus, we obtain:  

\begin{theorem} For $n\geqslant 4$, 
\begin{align*} 
\langle a_0,b_0,e_0,c_0,z \mid S,~ 
a_0z = b_0z = e_0z = z,~ 
z^2=(e_0b_0)^{n-3}e_0,~ 
z^2c_0=zc_0
\rangle
\end{align*} 
is a presentation of $\wEnd S_n$ on the generators $a_0$, $b_0$, $e_0$, $c_0$ and $z$. 
\end{theorem}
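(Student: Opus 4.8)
The plan is to prove this exactly as in the two preceding theorems, by verifying the two conditions of Proposition~\ref{provingpresentation}. Write $\sim_S$ for the congruence on $\{a_0,b_0,e_0,c_0\}^*$ generated by $S$ and $\sim$ for the congruence on $\{a_0,b_0,e_0,c_0,z\}^*$ generated by $S$ together with the three new families of relations. After the routine check that the generators satisfy all the relations, I would first record the elementary consequences $z^2\sim(e_0b_0)^{n-3}e_0$ (so $z^2$ has a $z$-free, $c_0$-free representative), $z^3\sim z$ (obtained precisely as in the proof of Theorem~\ref{endpres}), and $zc_0\sim z^2c_0\sim(e_0b_0)^{n-3}e_0c_0$ (the last equality coming from the relation $z^2c_0=zc_0$). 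The third consequence is what lets one rewrite a factor $zc_0$ as a $z$-free word, which is essential because, in contrast with the two earlier presentations, there is no relation absorbing $c_0$ against a neighbouring $z$.

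The technical heart of the argument, and the step I expect to be the main obstacle, consists of two transfer lemmas that carry equalities holding in $\PT_{n-1}^0$ (where they are governed by $S$) across the letter $z$. As transformations one computes, for every $v\in\{a_0,b_0,e_0,c_0\}^*$, that $z^2v=\transf{0&1&\cdots&n-1\\0&1v&\cdots&1v}$ depends only on $1v$, and that $vz^2=\transf{C&\Omega_{n-1}^0\setminus C\\0&1}$, with $C=\{x\in\Omega_{n-1}^0\mid xv=0\}$, depends only on $C$. From the first fact I get: if $1v=1v'$, then $z^2v$ and $z^2v'$ are $\sim$-equivalent to the $z$-free words $(e_0b_0)^{n-3}e_0v$ and $(e_0b_0)^{n-3}e_0v'$, which represent the same element of $\PT_{n-1}^0$ and are hence $\sim_S$-related; therefore $z^2v\sim z^2v'$, so $z^3v\sim z^3v'$, and $z^3\sim z$ yields $zv\sim zv'$. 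Symmetrically, from the second fact, if $v$ and $v'$ have the same $0$-preimage $C$, then $vz\sim v'z$.

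With these lemmas the normal-form reduction proceeds by induction on the number $k$ of occurrences of $z$ in a word $w=v_0zv_1z\cdots zv_k$, each $v_i\in\{a_0,b_0,e_0,c_0\}^*$. For a factor $zv_i$ the first transfer lemma gives $zv_i\sim zb_0^{\,1v_i-1}$ with $b_0^{\,1v_i-1}\in\{a_0,b_0,e_0\}^*$ when $1v_i\ne0$, and $zv_i\sim zc_0\sim(e_0b_0)^{n-3}e_0c_0$ (a $z$-free word) when $1v_i=0$. In the second situation the $z$-free word merges into the neighbouring block, dropping the $z$-count by one; in the first, I combine with the next $z$ and use $zpz\sim z^2\sim(e_0b_0)^{n-3}e_0$ for $p\in\{a_0,b_0,e_0\}^*$ (since $pz\sim z$), dropping the $z$-count by two. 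Either way the induction applies, and I conclude that every $w$ is $\sim$-equivalent to a word of one of two canonical shapes: (I) $v\in\{a_0,b_0,e_0,c_0\}^*$, representing an element of $\PT_{n-1}^0$; or (II) $vzu$ with $v\in\{a_0,b_0,e_0,c_0\}^*$ and $u\in\{a_0,b_0,e_0\}^*$, representing the collapsing transformation $\transf{C&\Omega_{n-1}^0\setminus C\\1u&0}$, where $C=\{x\mid xv=0\}$ and $1u\ne0$.

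Finally I would close exactly as before: given $w_1=w_2$ in $\wEnd S_n$, reduce both to canonical form and compare. If both have shape (I) they are equal in $\PT_{n-1}^0$, so $S$ gives $w_1\sim w_2$. The mixed case cannot occur, since a shape-(I) word fixes $0$ while a shape-(II) word $vzu$ sends $0$ to $1u\ne0$. If both have shape (II), say $v_1zu_1=v_2zu_2$, then comparing the two transformations forces $1u_1=1u_2$ and $\{x\mid xv_1=0\}=\{x\mid xv_2=0\}$; the first transfer lemma gives $zu_1\sim zu_2$ and the second gives $v_1z\sim v_2z$, so $v_1zu_1\sim v_1zu_2\sim v_2zu_2$, i.e.\ $w_1\sim w_2$. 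This establishes Condition~2 of Proposition~\ref{provingpresentation} and completes the proof.
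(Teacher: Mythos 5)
Your argument is correct, but it takes a genuinely different route from the paper's proof of this particular theorem. The paper departs from the style of the two preceding proofs and uses the Guess-and-Prove method of Theorem \ref{ruskuc}: it fixes one explicit word per element of $\wEnd S_n$ --- a set $W_0$ of canonical forms for $\PT^0_{n-1}$ together with $W_1=\{u_{A,B}zu_i\}$, where $u_{A,B}$ represents $\transf{0&A&B\\0&0&1}$ and $u_i$ represents $\transf{0&1&\cdots&n-1\\0&i&\cdots&i}$ --- notes that $|W_0\cup W_1|=|\wEnd S_n|$, and then only needs to check that $wx$ reduces back into $W_0\cup W_1$ for every $w$ in this set and every generator $x$; the cardinality count does the work of any injectivity argument. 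You instead verify Condition 2 of Proposition \ref{provingpresentation} directly, and the price you pay for dispensing with the count is the pair of transfer lemmas ($1v=1v'$ implies $zv\sim zv'$; equal $0$-preimages imply $vz\sim v'z$). These are correctly derived by passing through $z^2\sim(e_0b_0)^{n-3}e_0$, invoking the presentation $S$ of $\PT^0_{n-1}$ on the resulting $z$-free words, and cancelling via $z^3\sim z$ --- exactly the mechanism of Case 4 in the paper's proof of Theorem \ref{endpres}, now needed on both sides of $z$. The two normal-form families coincide (your shape (II) words $vzu$ represent the same transformations $\transf{C&\Omega_{n-1}^0\setminus C\\1u&0}$ as the paper's $u_{A,B}zu_i$), and your use of $z^2c_0=zc_0$ to convert a factor $zc_0$ into the $z$-free word $(e_0b_0)^{n-3}e_0c_0$ plays the same role as the paper's reduction of $wc_0$ for $w=u_{A,B}zu_1$. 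What your version buys is independence from the formula for $|\wEnd S_n|$ and from exhibiting a bijection between canonical words and elements; what the paper's version buys is a shorter verification once the canonical set has been correctly guessed.
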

\begin{proof} 
First, observe that it is a routine matter to check that the set of generators $\{a_0,b_0,e_0,c_0,z\}$ satisfies all the considered relations. 

Next, recall that $\wEnd S_n$ is the disjoint union of $\PT_{n-1}^0$ and 
$$
\left\{ \transf{0&A&B\\i& i & 0}\in\T(\Omega_{n-1}^0) \mid   A\cup B=\Omega_{n-1},~ A\cap B=\emptyset,~ i\in\Omega_{n-1} \right\}.
$$ 
For each $A,B\subseteq\Omega_{n-1}$ such that $A\cup B=\Omega_{n-1}$ and $A\cap B=\emptyset$, let $u_{A,B}$ be a word of 
$\{a_0,b_0,e_0,c_0\}^*$ representing the transformation $\transf{0&A&B\\0& 0 & 1}\in\PT_{n-1}^0$. 
For each $1\leqslant i\leqslant n-1$, let $u_i$ be a word of 
$\{a_0,b_0,e_0\}^*$ representing the transformation $\transf{0&1&\cdots&n-1\\0& i & \cdots &i}\in\T_{n-1}^0$. 
Notice that, as transformations, we have $\transf{0&A&B\\i& i & 0}=u_{A,B}zu_i$, 
for any disjoint subsets $A$ and $B$ of $\Omega_{n-1}$ such that $A\cup B=\Omega_{n-1}$ and $1\leqslant i\leqslant n-1$.  
Let $W_0$ be a set of canonical forms for the monoid $\PT^0_{n-1}$ regarding the set of generators $\{a_0,b_0,e_0,c_0\}$. 
Let 
$$
W_1=\{u_{A,B}zu_i\in \{a_0,b_0,e_0,c_0,z\}^*\mid A\cup B=\Omega_{n-1},~ A\cap B=\emptyset,~ 1\leqslant i\leqslant n-1\}
$$ 
and let $W=W_0\cup W_1$. Then, clearly, $|W|=|\wEnd S_n|$.  

Denote by $\sim_S$ the congruence  on $\{a_0,b_0,e_0,c_0\}^*$ generated by $S$ and by $\sim$ the congruence on $\{a_0,b_0,e_0,c_0,z\}^*$ 
generated by $S\cup\{a_0z = b_0z = e_0z = z,~ z^2=(e_0b_0)^{n-3}e_0,~ z^2c_0=zc_0\}$.  

Let $w\in W_0$. Since $\langle a_0,b_0,e_0,c_0\mid S\rangle$ is a presentation of  $\PT^0_{n-1}$, 
for each $x\in \{a_0,b_0,e_0,c_0\}$, we have $wx\sim_S w'$, whence $wx\sim w'$, for some $w'\in W_0$. 
On the other hand,  being $A=\{k\in\Omega_{n-1}\mid (k)w=0\}$ and $B=\Omega_{n-1}\setminus A$, 
we have $w(e_0b_0)^{n-3}e_0 = u_{A,B}$ and $(e_0b_0)^{n-3}e_0 = u_1$ (considering $w$, $u_1$ and $u_{A,B}$ as transformations of $\PT^0_{n-1}$), 
whence $w(e_0b_0)^{n-3}e_0 \sim_S u_{A,B}$ and $(e_0b_0)^{n-3}e_0 \sim_S u_1$, and so 
$$
wz\sim w (e_0b_0)^{n-3}e_0 (e_0b_0)^{n-3}e_0 z \sim u_{A,B} z^2z = u_{A,B} zz^2 \sim  u_{A,B} z (e_0b_0)^{n-3}e_0 
\sim  u_{A,B} z u_1 \in W_1. 
$$

Now, take a subset $A$ of $\Omega_{n-1}$, $B=\Omega_{n-1}\setminus A$ and $1\leqslant i\leqslant n-1$. Let $w=u_{A,B}zu_i\in W_1$. 
For each $x\in \{a_0,b_0,e_0\}$, we have $u_ix=u_{(i)x}$ (considering $u_i$ and $x$ as transformation of $\T_{n-1}^0$), whence $u_ix\sim_S u_{(i)x}$ and so 
$
wx=u_{A,B}zu_ix\sim u_{A,B}zu_{(i)x}\in W_1. 
$
On the other hand, (as transformations) we have  $u_1c_0=c_0u_1c_0$ and $u_ic_0=u_i$, if $2\leqslant i\leqslant n-1$, whence 
$$
wc_0=u_{A,B}zu_1c_0\sim  u_{A,B}zc_0u_1c_0 \sim  u_{A,B}z^2c_0u_1c_0 \sim  u_{A,B} (e_0b_0)^{n-3}e_0  c_0u_1c_0 \sim w', 
$$
for some $w'\in W_0$, and $wc_0=u_{A,B}zu_ic_0\sim u_{A,B}zu_i =w \in W_1$, if $2\leqslant i\leqslant n-1$. 
Finally, since $u_i \in \{a_0,b_0,e_0\}^*$, we have $u_iz\sim z$ and so $wz=u_{A,B}zu_iz \sim u_{A,B}z^2\sim u_{A,B}(e_0b_0)^{n-3}e_0 \sim w'$, 
for some $w'\in W_0$. 

Thus, we proved that $W$ is a set of canonical forms for $\wEnd S_n$, which completes this proof. 
\end{proof}

\medskip 

We finish this paper by exhibiting also presentations for $\End S_3$, $\swEnd S_3$ and $\wEnd S_3$. These presentations can be verified by using GAP \cite{GAP4}: 
\begin{equation*}
\begin{split} 
\End S_3 =
\langle a_0,z\mid a_0^2 =  1, ~ a_0z = z, ~z^3=z\rangle ;  
\end{split}
\end{equation*} 
\begin{equation*}
\begin{split} 
\swEnd S_3 =
\langle a_0,z,z_0\mid a_0^2 =  1, ~ a_0z = z, ~z^3=z, 
a_0z_0=zz_0=z_0^2=z_0a_0=z_0z^2=z_0\rangle ; 
\end{split}
\end{equation*} 
\begin{equation*}
\begin{split} 
\wEnd S_3 =
\langle a_0,c_0,z\mid a_0^2 =  1, ~ a_0z = z, ~z^3=z, 
c_0^2 = c_0,~  (c_0a_0)^2 = (a_0c_0)^2 = c_0a_0c_0, ~ \\ 
z^2c_0 = c_0a_0c_0,~  c_0z^2 = c_0a_0, ~ z^2a_0c_0 = z^2a_0,~ 
z^2c_0 = zc_0 
\rangle . 
\end{split}
\end{equation*}

\bigskip

\lastpage

\end{document}